\documentclass[11pt]{article}
\usepackage{amsmath}
\usepackage{amsfonts}

\setcounter{MaxMatrixCols}{10}

\newtheorem{theorem}{Theorem}

\newtheorem{definition}[theorem]{Definition}

\newtheorem{proposition}[theorem]{Proposition}
\newtheorem{remark}[theorem]{Remark}

\newenvironment{proof}[1][Proof]{\noindent\textbf{#1.} }{\ \rule{0.5em}{0.5em}}
\input{tcilatex}
\begin{document}

\title{The Bergman kernel for the Vekua equation}
\author{Hugo M. Campos$^{\text{a}}$, Vladislav V. Kravchenko$^{\text{b}}$ \\
$^{\text{a}}${\small School of Mathematical Sciences and Information
Technology,}\\
{\small Yachay Tech, Urcuqui 100119, Ecuador }\\
{\small email: hfernandes@yachaytech.edu.ec }\\
$^{\text{b}}${\small Regional mathematical center of Southern Federal
University, }\\
{\small Bol'shaya vadovaya, 105/42, Rostov-on-Don, 344006, Russian
Federation, }\\
{\small on leave from the Department of Mathematics, CINVESTAV del IPN, }\\
{\small Unidad Queretaro, } {\small Libramiento Norponiente No. 2000, }\\
{\small Fracc. Real de Juriquilla, Queretaro, Qro. C.P. 76230 MEXICO}\\
{\small e-mail: vkravchenko@math.cinvestav.edu.mx\thanks{%
H. M. Campos acknowledges the support by Yachay Tech, Ecuador and CONACYT,
Mexico via the research project CB 2013-222478F. V. V. Kravchenko
acknowledges the support by CONACYT, Mexico via the research project CB
2016-284470.}}}
\maketitle

\begin{abstract}
The paper extends some well-known results for analytic functions onto
solutions of the Vekua equation $\partial \overline{_{z}}W=aW+b\overline{W}$
regarding the existence and construction of the Bergman kernel and of the
corresponding Bergman projection operator.
\end{abstract}

\section{Introduction}

The Bergman kernel is a useful tool in complex analysis with important
applications in differential geometry, in theory of analytic and harmonic
functions and in the study of conformal mappings \cite{Bergman} (see also 
\cite{Krantz2} for new results and extensions to several complex variables).
The idea behind its construction is rather simple. The space of square
integrable analytic functions on a planar domain (called Bergman space) is
associated with a kernel that allows the reproduction of any function in the
Bergman space.

This kind of reproducing property was the base of important generalizations.
On the one hand, S. Bergman and his collaborators developed a method for
solving boundary value problems for elliptic pde's with variable
coefficients, based on the construction of some appropriate reproducing
kernels \cite{Bergman2}. On the other hand, analogous reproducing properties
appeared in the work of other mathematicians of that time (and even earlier)
in the study of other functional spaces. In \cite{Aronszajn} the abstract
theory of reproducing kernel Hilbert spaces (RKHS) was developed. The
Bergman-type kernels are concrete examples of RKHS. For the main results,
applications and historical facts about the RKHS theory see \cite{Aronszajn}%
, \cite{Bell}, \cite{Saitoh1} and \cite{Saitoh2}. Despite the fact that the
RKHS theory is well developed, when a concrete reproducing kernel is found,
special properties of the particular kernel and new connections with
different branches of analysis arise.

The goal of this paper is to establish the existence and some first
properties of the Bergman kernel for the Vekua equation. The paper is
organized as follows. In Section \ref{SectVek} we present some well known
properties of solutions of the Vekua equation (\ref{Vekua 1}) needed
throughout the paper and in Section \ref{SectBergman Sp} we define and study
the corresponding Bergman space $H^{2}(\Omega )$. Contrary to the case of
the Bergman space of analytic functions, $H^{2}(\Omega )$ is a real linear
space (in general, not complex). By introducing a suitable scalar product we
realize that $H^{2}(\Omega )$ is a Hilbert space and establish useful
convergence properties. Using this, we show in Section \ref{SectionExistence}
that the real and the imaginary parts of the pointwise evaluation are
continuous linear functionals on $H^{2}(\Omega )$. This leads first to the
existence of the Bergman kernel in $H^{2}(\Omega )$ and second to a
realization of a reproducing property in the space $H^{2}(\Omega )$.
Finally, in Section \ref{Sect_Project} we provide a method for constructing
the Bergman kernel in terms of any countable orthonormal basis in $%
H^{2}(\Omega )$. Moreover, we prove that such countable orthonormal basis
always exists and provide some suggestions for its construction.
Additionally we obtain a representation for the Bergman projection from $%
L^{2}(\Omega )$ onto $H^{2}(\Omega )$.

\section{Properties of solutions of the Vekua equation\label{SectVek}}

Let $K\subset \mathbb{C}$ be a compact set. A function $W:K\rightarrow 
\mathbb{C}$ is called H\"{o}lder continuous on $K$ with exponent $0<\alpha
<1 $ if there exists $M>0$ such that%
\begin{equation*}
\left\vert W(z_{1})-W(z_{2})\right\vert \leq M\left\vert
z_{1}-z_{2}\right\vert ^{\alpha }
\end{equation*}%
for all $z_{1}$, $z_{2}\in K$. The set of such functions is denoted by $%
C^{\alpha }(K)$. If $\Omega $ is a domain (that is, open and connected) then
we define $C^{\alpha }(\Omega )$ to be the space of functions $W:\Omega
\rightarrow \mathbb{C}$ such that $W\in C^{\alpha }(K)$ for any $K\subset
\Omega $ compact.

Consider the Vekua equation%
\begin{equation}
\partial \overline{_{z}}W=aW+b\overline{W}\text{, \ \ \ in }\Omega \text{,}
\label{Vekua 1}
\end{equation}%
where $\partial \overline{_{z}}=\frac{1}{2}(\partial _{x}+i\partial _{y}$)
and the coefficients $a,b\in C^{\alpha }(\Omega )$ are bounded functions in $%
\Omega $. We shall deal with classical solutions of (\ref{Vekua 1}).

\begin{proposition}[{\protect\cite[pag 7]{BersBook}}]
\label{T_prop}Let $\varphi $ be a bounded measurable function defined on a
bounded domain $\Omega \subset \mathbb{C}$. Consider the integral operator%
\begin{equation*}
(T_{\Omega }\varphi \mathcal{)(}z):\mathcal{=}\frac{1}{\pi }\int_{\Omega }%
\frac{\varphi (\zeta )}{z-\zeta }d\Omega _{\zeta }.
\end{equation*}%
Then

\begin{description}
\item[$(i)$] If $\left\vert \varphi (z)\right\vert \leq M$ in $\Omega $ then 
$\left\vert T_{\Omega }\varphi (z)\right\vert \leq k_{1}M$ for all $z\in 
\mathbb{C}$, $k_{1}$ depending only on the area of $\Omega $.

\item[$(ii)$] $T_{\Omega }\varphi (z)\in C^{\alpha }(K)$ for any compact $%
K\subset \mathbb{C}$ and any $0<\alpha <1$.

\item[$(iii)$] If $\varphi \in C^{\alpha }(\Omega )$, then the first order
partial derivatives of $T_{\Omega }\varphi $ belong to $C^{\alpha }(\Omega )$%
, and $\partial \overline{_{z}}T_{\Omega }\varphi (z)=\varphi (z)$ in $%
\Omega $.
\end{description}
\end{proposition}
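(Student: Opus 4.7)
Since the three statements address progressively finer properties of the singular integral $T_{\Omega}$, I would tackle them in the order (i), (ii), (iii), reusing each estimate in the next step.

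For (i), the plan is to reduce the bound to an explicit computation on a disk. Since $|\varphi|\le M$, one has $|T_{\Omega}\varphi(z)|\le\frac{M}{\pi}\int_{\Omega}|z-\zeta|^{-1}\,d\Omega_{\zeta}$. The integrand $|z-\zeta|^{-1}$ is a radially symmetric decreasing function of $\zeta$ around $z$, so among all measurable sets $E$ of a fixed area $|\Omega|$, the integral $\int_{E}|z-\zeta|^{-1}\,d\Omega_{\zeta}$ is maximized when $E$ is the disk $D(z,R)$ with $\pi R^{2}=|\Omega|$. A direct polar computation on that disk gives $2\pi R=2\sqrt{\pi|\Omega|}$, so $k_{1}=2\sqrt{|\Omega|/\pi}$ works.

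For (ii), fix a compact $K\subset\mathbb{C}$ and $z_{1},z_{2}\in K$ with $d=|z_{1}-z_{2}|$. Writing
\begin{equation*}
T_{\Omega}\varphi(z_{1})-T_{\Omega}\varphi(z_{2})=\frac{z_{1}-z_{2}}{\pi}\int_{\Omega}\frac{\varphi(\zeta)}{(z_{1}-\zeta)(z_{2}-\zeta)}\,d\Omega_{\zeta},
\end{equation*}
I would split $\Omega$ into the ``near'' part $\Omega_{\mathrm{near}}=\Omega\cap D(z_{1},2d)$ and the ``far'' part $\Omega\setminus\Omega_{\mathrm{near}}$. On $\Omega_{\mathrm{near}}$ I would bound each factor separately using the estimate from (i), obtaining a contribution of order $Md$. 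On the far part I have $|z_{j}-\zeta|\gtrsim|z_{1}-\zeta|$, and since $\Omega$ is bounded inside some large disk $D_{R_{0}}$, the integral $\int_{\Omega\setminus\Omega_{\mathrm{near}}}|z_{1}-\zeta|^{-2}\,d\Omega_{\zeta}$ can be estimated by a radial computation and produces a factor of order $\log(R_{0}/d)$. Altogether one obtains $|T_{\Omega}\varphi(z_{1})-T_{\Omega}\varphi(z_{2})|\le C_{K}Md\bigl(1+|\log d|\bigr)$, which dominates $C_{K,\alpha}Md^{\alpha}$ for every $0<\alpha<1$, giving the required membership in $C^{\alpha}(K)$.

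For (iii), which I expect to be the main obstacle because differentiating under the integral is not directly legitimate, the plan is the classical subtraction-and-localization trick. Fix $z_{0}\in\Omega$ and a small disk $D=D(z_{0},\rho)\Subset\Omega$. Decompose $T_{\Omega}\varphi=T_{D}\varphi+T_{\Omega\setminus D}\varphi$. The piece $T_{\Omega\setminus D}\varphi$ is smooth near $z_{0}$ with $\partial_{\overline{z}}T_{\Omega\setminus D}\varphi(z_{0})=0$ by direct differentiation under the integral (no singularity). For $T_{D}\varphi$ I would rewrite, using $\varphi\in C^{\alpha}$,
\begin{equation*}
T_{D}\varphi(z)=\frac{1}{\pi}\int_{D}\frac{\varphi(\zeta)-\varphi(z)}{z-\zeta}\,d\Omega_{\zeta}+\frac{\varphi(z)}{\pi}\int_{D}\frac{d\Omega_{\zeta}}{z-\zeta}.
\end{equation*}
The first integral has an absolutely integrable kernel of order $|z-\zeta|^{\alpha-1}$ and, together with its derivatives obtained by formally passing $\partial_{x},\partial_{y}$ through the integral and controlling them by Hölder differences, can be shown to lie in $C^{\alpha}$; in particular its $\partial_{\overline{z}}$ vanishes at $z_{0}$ up to the value coming from the boundary computation. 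The second integral is an explicit Cauchy-type integral over a disk, whose first derivatives are classical and whose $\partial_{\overline{z}}$ produces exactly $\varphi(z_{0})$ (via the Cauchy--Pompeiu formula or a direct Green's identity computation on $D$). Combining these pieces gives $\partial_{\overline{z}}T_{\Omega}\varphi(z_{0})=\varphi(z_{0})$, and the Hölder bounds in each step propagate to yield $\partial_{x}T_{\Omega}\varphi,\partial_{y}T_{\Omega}\varphi\in C^{\alpha}(\Omega)$.
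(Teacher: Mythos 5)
This proposition is not proved in the paper at all: it is quoted from Bers (the reference \cite[pag 7]{BersBook} attached to the statement), so there is no ``paper proof'' to match your plan against; what you have written is essentially a reconstruction of the classical potential-theoretic argument of Bers and Vekua. Parts (i) and (ii) of your plan are sound. The symmetrization (bathtub) argument gives the explicit constant $k_{1}=2\sqrt{|\Omega|/\pi}$, and the near/far splitting for the difference works: on the near region one either estimates the two Cauchy transforms over $\Omega\cap D(z_{1},2d)$ separately via (i), or keeps the product kernel and notes that the factor corresponding to the more distant of $z_{1},z_{2}$ is at least $d/2$; either way the contribution is of order $Md$, and together with the logarithmic far-field bound this yields $|T_{\Omega}\varphi(z_{1})-T_{\Omega}\varphi(z_{2})|\leq CMd(1+|\log d|)$, which dominates $d^{\alpha}$ for every $\alpha<1$ on a compact set. (The sign $z_{1}-z_{2}$ versus $z_{2}-z_{1}$ in your difference identity is immaterial.)

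Part (iii), which you correctly identify as the main obstacle, is where the plan has a genuine gap. Your decomposition subtracts the \emph{variable} value $\varphi(z)$, and you then propose to ``formally pass $\partial_{x},\partial_{y}$ through the integral'' and treat the second term as having ``classical'' derivatives. Neither step is legitimate as written, because $\varphi$ is only H\"{o}lder continuous: the integrand $\frac{\varphi(\zeta)-\varphi(z)}{z-\zeta}$ depends on $z$ through the non-differentiable factor $\varphi(z)$, and the second term equals $\varphi(z)\,\overline{(z-z_{0})}$ on $D$, which is differentiable only at the single point $z_{0}$ (where the smooth factor vanishes), not in a neighborhood. The standard repair, which is what Bers and Vekua actually do, is to subtract the \emph{constant} $\varphi(z_{0})$, establish differentiability at $z_{0}$ by difference quotients (the Cauchy--Pompeiu computation then gives $\partial_{\overline{z}}T_{\Omega}\varphi(z_{0})=\varphi(z_{0})$, as you intend), and then identify $\partial_{z}T_{\Omega}\varphi$ with the principal-value integral $\Pi\varphi(z)=-\frac{1}{\pi}\,\mathrm{p.v.}\int_{\Omega}\frac{\varphi(\zeta)}{(\zeta-z)^{2}}\,d\Omega_{\zeta}$ and prove that $\Pi$ maps $C^{\alpha}$ into $C^{\alpha}$ (locally). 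That last estimate is the real analytic content of (iii): it rests on the cancellation $\int_{r<|\zeta-z|<R}(\zeta-z)^{-2}\,d\Omega_{\zeta}=0$ over annuli combined with the H\"{o}lder modulus of $\varphi$, and it is exactly what your phrase ``can be shown to lie in $C^{\alpha}$'' leaves unproved. Until that estimate is supplied (or explicitly cited), the H\"{o}lder continuity of the first-order derivatives of $T_{\Omega}\varphi$ is asserted rather than established.
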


\bigskip

\begin{proposition}
\label{prop_int_eq}Let $W$ be a bounded measurable function in a domain $%
\Omega $. Set%
\begin{equation*}
\Phi =W-T_{\Omega }(aW+b\overline{W}).
\end{equation*}%
Then, $W$ is a solution of (\ref{Vekua 1}) in $\Omega $ iff $\Phi $ is
analytic in $\Omega .$
\end{proposition}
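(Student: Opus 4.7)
The plan is to compute $\partial_{\bar z}\Phi$ directly using Proposition~\ref{T_prop}(iii) and observe that the resulting identity immediately encodes both implications of the equivalence. Formally, assuming we may apply (iii) to $\varphi := aW + b\overline{W}$, we obtain
\begin{equation*}
\partial \overline{_{z}}\Phi = \partial \overline{_{z}}W - \partial \overline{_{z}}T_{\Omega}(aW+b\overline{W}) = \partial \overline{_{z}}W - (aW+b\overline{W}),
\end{equation*}
so $\partial \overline{_{z}}\Phi \equiv 0$ in $\Omega$ if and only if $W$ satisfies (\ref{Vekua 1}). Thus $\Phi$ is analytic iff $W$ solves the Vekua equation.

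The only technical point is to justify that $aW+b\overline{W}$ belongs to $C^{\alpha}(\Omega)$, because that is the hypothesis under which part (iii) yields $\partial \overline{_{z}}T_{\Omega}\varphi = \varphi$. For the direction ``$W$ Vekua-solution $\Rightarrow \Phi$ analytic,'' the proof deals with classical solutions, so $W$ is continuously differentiable and hence locally H\"{o}lder; combined with $a,b\in C^{\alpha}(\Omega)$ and their boundedness, this gives $aW+b\overline{W}\in C^{\alpha}(\Omega)$, and the computation above goes through. For the converse direction ``$\Phi$ analytic $\Rightarrow W$ Vekua-solution,'' I would first upgrade the regularity of $W$: since $\varphi=aW+b\overline{W}$ is bounded and measurable, part (ii) of Proposition~\ref{T_prop} gives $T_{\Omega}\varphi\in C^{\alpha}$ on compacta; adding the analytic function $\Phi$ produces $W=\Phi+T_{\Omega}\varphi\in C^{\alpha}(\Omega)$, which in turn puts $\varphi$ back in $C^{\alpha}(\Omega)$, so part (iii) now applies legitimately.

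The main (minor) obstacle is precisely this bootstrap of regularity in the ``$\Leftarrow$'' direction: one must avoid circularity by first using only (i)--(ii) to establish H\"{o}lder continuity of $W$, and only then invoking (iii). Once that is in place, the calculation of $\partial \overline{_{z}}\Phi$ is a one-line verification and the equivalence follows immediately.
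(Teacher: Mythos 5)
Your proposal is correct and follows essentially the same route as the paper's proof: a direct computation of $\partial\overline{_{z}}\Phi$ via Proposition~\ref{T_prop}(iii) for the forward implication, and a regularity bootstrap (using the boundedness of $W$ and the H\"older continuity of $T_{\Omega}\varphi$ from the earlier parts of Proposition~\ref{T_prop}) to legitimize applying (iii) in the converse. In fact your explicit appeal to part (ii) for the H\"older continuity of $T_{\Omega}(aW+b\overline{W})$ is slightly more precise than the paper's citation of part (i) at that step.
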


\begin{proof}
First note that if $W\in C^{\alpha }(\Omega )$ then from Proposition \ref%
{T_prop}, part $(iii)$, $T_{\Omega }(aW+b\overline{W})$ is continuously
differentiable and 
\begin{equation*}
\partial \overline{_{z}}T_{\Omega }(aW+b\overline{W})=aW+b\overline{W}.
\end{equation*}

Assume that $W\in C^{1}(\Omega )$ (thus, $W\in C^{\alpha }(\Omega )$) is a
solution of (\ref{Vekua 1}). Then application of $\partial \overline{_{z}}$
to $\Phi $ gives $\partial \overline{_{z}}\Phi =\partial \overline{_{z}}%
W-(aW+b\overline{W})=0$. Thus $\Phi $ is analytic. On the other hand suppose
that $\Phi $ is analytic. Then, as $W$ is bounded by Proposition \ref{T_prop}%
, part $(i)$, $T_{\Omega }(aW+b\overline{W})\in C^{\alpha }(\Omega )$. Since 
$\Phi \in C^{\alpha }(\Omega )$ it follows that $W\in C^{\alpha }(\Omega )$.
Using $\partial _{\overline{z}}\Phi =0$ we get 
\begin{equation*}
\partial _{\overline{z}}W=\partial _{\overline{z}}\Phi +\partial _{\overline{%
z}}T_{\Omega }(aW+b\overline{W})=aW+b\overline{W}.
\end{equation*}
\end{proof}

\begin{proposition}
Any solution of (\ref{Vekua 1}) in $\Omega $ has H\"{o}lder continuous
partial derivatives in every compact subdomain of $\Omega $.
\end{proposition}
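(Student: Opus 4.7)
The plan is to use the integral representation from Proposition \ref{prop_int_eq} to bootstrap the regularity of $W$ via Proposition \ref{T_prop}(iii), after a standard localization argument.

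First I would localize. Given a compact $K\subset\Omega$, pick a bounded subdomain $\Omega'$ with $K\subset\Omega'$ and $\overline{\Omega'}\subset\Omega$. Since the statement concerns \emph{classical} solutions, $W$ is at least continuous on $\overline{\Omega'}$, hence bounded and measurable there. This puts us in the hypotheses of Proposition \ref{prop_int_eq} with $\Omega'$ in place of $\Omega$, so in $\Omega'$ we may write
\begin{equation*}
W=\Phi+T_{\Omega'}(aW+b\overline{W}),
\end{equation*}
with $\Phi$ analytic in $\Omega'$.

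Next I would show that the right-hand side is $C^\alpha$. This is essentially already contained in the proof of Proposition \ref{prop_int_eq}: since $aW+b\overline{W}$ is bounded on $\Omega'$, Proposition \ref{T_prop}(ii) gives $T_{\Omega'}(aW+b\overline{W})\in C^\alpha$ on compacts, and $\Phi$ analytic is certainly Hölder continuous on $K$, so $W\in C^\alpha(\overline{\Omega''})$ for any compact $\Omega''$ with $K\subset\Omega''\subset\Omega'$. Then, because $a,b\in C^\alpha(\Omega)$ and $W,\overline{W}\in C^\alpha$ on compact subsets of $\Omega'$, the product $aW+b\overline{W}$ is in $C^\alpha(\Omega')$ (using that the product of a bounded Hölder function with another Hölder function is Hölder).

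The decisive step is now Proposition \ref{T_prop}(iii): applied to the $C^\alpha$ function $aW+b\overline{W}$, it yields that the first order partial derivatives of $T_{\Omega'}(aW+b\overline{W})$ belong to $C^\alpha(\Omega')$. Since $\Phi$ is analytic its partial derivatives are $C^\infty$, in particular Hölder on $K$. Adding these contributions from the identity $W=\Phi+T_{\Omega'}(aW+b\overline{W})$ gives $\partial_x W,\partial_y W\in C^\alpha(K)$, as required. I do not foresee a serious obstacle; the only point requiring a little care is the localization to a bounded $\Omega'$ (needed so that $T_{\Omega'}$ is defined) and the verification that the ring $C^\alpha$ behaves well enough to feed $aW+b\overline{W}$ back into part (iii) of Proposition \ref{T_prop}.
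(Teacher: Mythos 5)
Your argument is correct and is essentially the paper's own proof spelled out in detail: the paper simply cites Proposition \ref{prop_int_eq} together with Proposition \ref{T_prop}, i.e.\ the same decomposition $W=\Phi+T(aW+b\overline{W})$ with the same bootstrap through parts $(ii)$ and $(iii)$. Your localization to a bounded subdomain $\Omega'$ with $\overline{\Omega'}\subset\Omega$ is a sensible way to guarantee boundedness of $W$ and of the domain, but it does not change the route.
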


\begin{proof}
This follows from the above proposition and from the properties of the
operator $T_{\Omega }$ established in Proposition \ref{T_prop}.
\end{proof}

\begin{proposition}
\label{theorem_pseudo_conv}The limit of a uniformly convergent sequence of
solutions of (\ref{Vekua 1}) is a solution of (\ref{Vekua 1}) as well.
\end{proposition}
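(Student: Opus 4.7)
The plan is to reduce the statement to the analogue for holomorphic functions by means of Proposition~\ref{prop_int_eq}, invoking Weierstrass's theorem on the uniform limit of analytic functions. Since the Vekua equation is a local condition, it suffices to verify it in a neighbourhood of every point. So I would fix $z_{0}\in\Omega$, choose a bounded subdomain $\Omega'$ with $z_{0}\in\Omega'$ and $\overline{\Omega'}\subset\Omega$, and work throughout on $\Omega'$, where the operator $T_{\Omega'}$ and Proposition~\ref{T_prop} are available.

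Let $\{W_{n}\}$ be a sequence of solutions of (\ref{Vekua 1}) converging uniformly to $W$ on $\Omega$ (hence, in particular, uniformly on $\Omega'$). The limit $W$ is continuous and bounded on $\Omega'$. By Proposition~\ref{prop_int_eq}, each function
\[
\Phi_{n}\;=\;W_{n}-T_{\Omega'}(aW_{n}+b\overline{W_{n}})
\]
is analytic in $\Omega'$. Since $a$ and $b$ are bounded on $\Omega$, the densities $aW_{n}+b\overline{W_{n}}$ converge uniformly on $\Omega'$ to $aW+b\overline{W}$. Part $(i)$ of Proposition~\ref{T_prop} applied to the difference $a(W_{n}-W)+b(\overline{W_{n}}-\overline{W})$ gives the uniform bound
\[
\bigl|T_{\Omega'}(aW_{n}+b\overline{W_{n}})(z)-T_{\Omega'}(aW+b\overline{W})(z)\bigr|\;\leq\;k_{1}\bigl\|a(W_{n}-W)+b(\overline{W_{n}}-\overline{W})\bigr\|_{\infty},
\]
valid for every $z\in\mathbb{C}$, so $T_{\Omega'}(aW_{n}+b\overline{W_{n}})\to T_{\Omega'}(aW+b\overline{W})$ uniformly on $\overline{\Omega'}$.

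Consequently $\Phi_{n}\to\Phi:=W-T_{\Omega'}(aW+b\overline{W})$ uniformly on $\Omega'$. By Weierstrass's theorem the pointwise/uniform limit of analytic functions on a domain is itself analytic, so $\Phi$ is analytic in $\Omega'$. Applying the converse direction of Proposition~\ref{prop_int_eq} now yields that $W$ solves the Vekua equation (\ref{Vekua 1}) in $\Omega'$, and since $z_{0}\in\Omega$ was arbitrary, $W$ is a solution in all of $\Omega$.

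The proof is essentially mechanical once the right equivalence is invoked; the only mildly delicate point is dealing with the fact that Proposition~\ref{T_prop} is stated for bounded domains, which is exactly why I localise to $\Omega'$ before writing $\Phi_{n}$. Beyond that, the estimate $(i)$ from Proposition~\ref{T_prop} does all the analytic work, turning uniform convergence of the data into uniform convergence of the Cauchy-type transforms, and the invariance of analyticity under uniform limits closes the argument.
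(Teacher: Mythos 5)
Your proof is correct and follows essentially the same route as the paper: pass to the analytic functions $\Phi_{n}=W_{n}-T(aW_{n}+b\overline{W_{n}})$ via Proposition~\ref{prop_int_eq}, use the estimate $(i)$ of Proposition~\ref{T_prop} to get uniform convergence of the transforms, invoke Weierstrass, and apply Proposition~\ref{prop_int_eq} in the converse direction. Your localisation to a bounded subdomain $\Omega'$ (which also guarantees boundedness of $W$ and the $W_{n}$ there) and the explicit convergence estimate are details the paper leaves implicit, but the argument is the same.
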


\begin{proof}
Let $W_{n}(z)$ be a sequence of bounded solutions of (\ref{Vekua 1}) in $%
\Omega $ such that $W_{n}(z)\rightarrow W(z)$ uniformly in $\Omega $. It
follows from Proposition \ref{prop_int_eq} that the functions $\Phi _{n}$
defined by%
\begin{equation*}
\Phi _{n}=W_{n}-T_{\Omega }(aW_{n}+b\overline{W_{n}})
\end{equation*}%
are analytic in $\Omega $. Then the uniform limit $\Phi =W-T_{\Omega }(aW+b%
\overline{W})$ is also an analytic function in $\Omega $. The last equality
together with Proposition \ref{prop_int_eq} allows one to conclude that $W$
is a solution of (\ref{Vekua 1}) in $\Omega $.
\end{proof}

\begin{theorem}[Similarity Principle \protect\cite{BersBook}, \protect\cite%
{VekuaBook}]
\label{Similarity}Let $W$ be a solution of (\ref{Vekua 1}) in $\Omega $
(allowed to be unbounded) and set $S:=T_{\Omega }(a+\frac{\overline{W}}{W}b)$%
. Then there exists an analytic function $\Psi $ in $\Omega $ such that%
\begin{equation*}
W=\Psi e^{S},\quad \text{in }\Omega \text{.}
\end{equation*}
\end{theorem}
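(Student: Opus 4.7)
The plan is to exhibit the claimed analytic function as $\Psi := W e^{-S}$, so the task reduces to verifying that $\partial_{\bar z}(W e^{-S}) = 0$. The exponential $e^{-S}$ plays the role of an integrating factor that converts the Vekua equation into the Cauchy--Riemann equation.

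First I would make sense of the definition of $S$. The coefficient $\omega := a + (\bar W/W)\,b$ is only defined where $W\neq 0$; I would extend it by setting $\omega = a$ (or any bounded value) on the zero set of $W$. Since $|\bar W/W| = 1$ wherever $W$ is nonzero, this gives $|\omega|\leq |a|+|b|$, so $\omega$ is bounded and measurable on $\Omega$. Proposition \ref{T_prop}(i)--(ii) then guarantees that $S = T_\Omega \omega$ is a bounded, locally H\"older continuous function on $\mathbb{C}$, so that both $e^S$ and $e^{-S}$ are well defined and locally H\"older in $\Omega$.

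Next I would carry out the formal computation
\begin{equation*}
\partial_{\bar z}\bigl(W e^{-S}\bigr) \;=\; e^{-S}\bigl(\partial_{\bar z} W - W\,\partial_{\bar z} S\bigr) \;=\; e^{-S}\bigl(aW + b\bar W - W\omega\bigr),
\end{equation*}
noting that $W\omega = aW + b\bar W$ on $\{W\neq 0\}$, so the right-hand side vanishes there; on $\{W=0\}$ the factor $W e^{-S}$ itself is zero, and the bracket also vanishes. Thus $W e^{-S}$ is pointwise $\partial_{\bar z}$-annihilated wherever the derivatives exist classically.

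The hard part will be making the derivative $\partial_{\bar z} S = \omega$ rigorous, because Proposition \ref{T_prop}(iii) requires the coefficient to be H\"older continuous, while our $\omega$ may be merely bounded and measurable (it can be discontinuous on the zero set of $W$). My plan is to pass to the distributional formulation: for any bounded measurable $\omega$ on a bounded $\Omega$, the Cauchy--Pompeiu formula still yields $\partial_{\bar z} T_\Omega \omega = \omega$ in the sense of distributions. Combined with $\partial_{\bar z} W = aW + b\bar W$ in the classical sense, this gives $\partial_{\bar z}(W e^{-S}) = 0$ distributionally, so by Weyl's lemma (or, equivalently, by applying Proposition \ref{prop_int_eq} with $a=b=0$ to the bounded function $We^{-S}$), $\Psi := W e^{-S}$ is analytic in $\Omega$. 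The identity $W = \Psi\, e^S$ then follows, and the extension to unbounded $W$ is obtained by applying the argument on an exhaustion of $\Omega$ by bounded subdomains where the integral operator $T_\Omega$ is well defined.
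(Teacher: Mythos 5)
The paper does not prove this theorem at all: it is quoted from Bers and Vekua, with only Remark \ref{Remark Similarity} added about the boundedness of $S$. Your argument is essentially the classical similarity-principle proof from those references, and its outline is sound: extend $\omega:=a+(\overline{W}/W)b$ boundedly across the zero set of $W$, note that $\omega W=aW+b\overline{W}$ holds \emph{everywhere} (each term carries a factor $W$ or $\overline{W}$), show $\partial_{\overline{z}}\bigl(We^{-S}\bigr)=0$ in the distributional sense, and conclude analyticity of $\Psi=We^{-S}$ by Weyl's lemma; this also correctly circumvents the only delicate point, the zero set of $W$, where $\omega$ need not be continuous (the alternative classical route is to get analyticity of $\Psi$ off the zero set and invoke Rad\'o's theorem).

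Three spots deserve more care. First, to justify the product and chain rules behind $\partial_{\overline{z}}(We^{-S})=e^{-S}(\partial_{\overline{z}}W-W\partial_{\overline{z}}S)$ you need more than the distributional identity $\partial_{\overline{z}}T_{\Omega}\omega=\omega$: you need $S\in W^{1,p}_{\mathrm{loc}}$, i.e.\ also that $\partial_{z}S=\Pi\omega$ (the Beurling transform of $\omega$) lies in $L^{p}_{\mathrm{loc}}$; this is standard Calder\'on--Zygmund material found in Vekua's book, but it is an ingredient your sketch silently assumes. Second, the parenthetical claim that Weyl's lemma is ``equivalently'' Proposition \ref{prop_int_eq} with $a=b=0$ is not right: that proposition concerns classical ($C^{1}$, respectively H\"older) solutions and with $a=b=0$ it reduces to a tautology; passing from the distributional equation to analyticity genuinely requires Weyl's lemma or a mollification argument. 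Third, the closing exhaustion argument is both unnecessary and, as stated, incomplete: unboundedness of $W$ is harmless because $T_{\Omega}$ is applied only to the bounded function $\omega$ (exactly the point of Remark \ref{Remark Similarity}) and analyticity is local, whereas if you do work on subdomains $\Omega'\subset\subset\Omega$ the kernel changes to $S'=T_{\Omega'}\omega$, and you must add the observation that $S-S'=T_{\Omega\setminus\Omega'}\omega$ is analytic in $\Omega'$ to recover the stated factorization with the global $S$. None of these affects the overall strategy, which matches the cited proofs.
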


\begin{remark}
\label{Remark Similarity}We stress that $a+\frac{\overline{W}}{W}b$ is a
bounded function in $\Omega $ (even if $W$ is unbounded) and thus from part $%
(ii)$ of Proposition \ref{T_prop} it follows that the function $S$ of the
above theorem is H\"{o}lder continuous in every bounded domain. Moreover,
from part $(i)$ of the same proposition it is easy to see that $\left\vert
S(z)\right\vert =k_{1}M$ for all $z\in \mathbb{C}$, where $M=\underset{%
\Omega }{\sup }(\left\vert a(z)\right\vert +\left\vert b(z)\right\vert )$.
This proves that $S$ is bounded by a constant that does not depend on $W$.
\end{remark}

\section{The Bergman space for the Vekua equation\label{SectBergman Sp}}

Let $\Omega $ be a bounded domain and $L^{2}(\Omega )$ the classical
Lebesgue space of complex valued measurable functions $\varphi $ enjoying
the property $\int_{\Omega }\left\vert \varphi (z)\right\vert
^{2}dxdy<\infty $. We define the Bergman space of the Vekua equation (\ref%
{Vekua 1}) as%
\begin{equation*}
H^{2}(\Omega )=\left\{ W\in C^{1}(\Omega )\cap L^{2}(\Omega ):\partial 
\overline{_{z}}W=aW+b\overline{W}\text{ in }\Omega \right\} \text{,}
\end{equation*}%
where $a$, $b\in C^{\alpha }(\Omega )$ are bounded functions in $\Omega $.

It would be convenient to introduce an inner product structure on $%
H^{2}(\Omega )$, regarding it as a subspace of $L^{2}(\Omega )$. However,
since $H^{2}(\Omega )$ is a real linear space it is necessary to consider $%
L^{2}(\Omega )$ as a real linear space as well. The following remark is
useful for what follows.

\begin{remark}
\label{LinearRC}Let $\mathcal{H}_{\mathbb{C}}$ be a complex linear space and 
$\left\langle \cdot ,\cdot \right\rangle _{\mathbb{C}}$ a (complex) inner
product defined there. Denote by $\mathcal{H}_{\mathbb{R}}$ the same set (as 
$\mathcal{H}_{\mathbb{C}}$) but understood as a real linear space. Then, it
is straightforward to check that $\left\langle u,v\right\rangle _{\mathbb{R}%
}:=\func{Re}\left\langle u,v\right\rangle _{\mathbb{C}}$ is a (real) inner
product in $\mathcal{H}_{\mathbb{R}}$. Moreover, both scalar products induce
the same\ norm, that is,%
\begin{equation*}
\left\Vert u\right\Vert _{\mathbb{C}}=\sqrt{\left\langle u,u\right\rangle _{%
\mathbb{C}}}=\sqrt{\left\langle u,u\right\rangle _{\mathbb{R}}}=\left\Vert
u\right\Vert _{\mathbb{R}}\text{ for all }u\in \mathcal{H}_{\mathbb{C}%
}\equiv \mathcal{H}_{\mathbb{R}}.
\end{equation*}%
Also, if $\mathcal{H}_{\mathbb{C}}$ is a complex Hilbert space then $%
\mathcal{H}_{\mathbb{R}}$ is a real Hilbert space.
\end{remark}

From now on, and according to the previous remark, we consider the real
Hilbert space $L^{2}(\Omega )$ with the inner product given by

\begin{eqnarray*}
\left\langle W,V\right\rangle &=&\func{Re}\int_{\Omega }W\overline{V}dxdy \\
&=&\int_{\Omega }\func{Re}W\func{Re}Vdxdy+\int_{\Omega }\func{Im}W\func{Im}%
Vdxdy.
\end{eqnarray*}%
The norm and the inner product in $H^{2}(\Omega )$ are those induced by $%
L^{2}(\Omega )$,%
\begin{equation*}
\left\langle W,V\right\rangle _{H^{2}(\Omega )}=\func{Re}\int_{\Omega }W%
\overline{V}dxdy\text{, \ \ }\left\Vert W\right\Vert _{H^{2}(\Omega
)}:=\left\Vert W\right\Vert _{L^{2}(\Omega )}.
\end{equation*}

If $a=b\equiv 0$ then $H^{2}(\Omega )$ coincides with the classical Bergman
space of analytic functions which we shall denote by $A^{2}(\Omega )$
(however, and contrary to the classical case, in the present paper $%
A^{2}(\Omega )$ is understood as a real space).

The next two propositions generalize corresponding well known results for $%
A^{2}(\Omega )$ \cite{Bell},\cite{Krantz}.

\begin{proposition}
\label{Imersion_acotada}Let $K\subset \Omega $ be a compact. There is a
constant $C_{K}>0$ such that for all $W\in H^{2}(\Omega )$ the following
inequality is valid 
\begin{equation*}
\sup_{z\in K}\left\vert W(z)\right\vert \leq C_{K}\left\Vert W\right\Vert
_{H^{2}(\Omega )}\text{.}
\end{equation*}
\end{proposition}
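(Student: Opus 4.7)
The plan is to reduce the estimate to the classical Bergman space estimate for analytic functions by invoking the Similarity Principle (Theorem \ref{Similarity}) together with the uniform boundedness of the exponent $S$ noted in Remark \ref{Remark Similarity}. Concretely, for any $W\in H^{2}(\Omega)$ I would write $W=\Psi e^{S}$ where $\Psi$ is analytic in $\Omega$ and $|S(z)|\le k_{1}M$ with $M=\sup_{\Omega}(|a|+|b|)$; the key point is that the bound on $S$ depends only on $a,b$ and $\Omega$, not on $W$. In particular $e^{S}$ and $e^{-S}$ are uniformly bounded on $\Omega$ by $e^{k_{1}M}$, so $\Psi=We^{-S}$ lies in $L^{2}(\Omega)$ with
\[
\|\Psi\|_{L^{2}(\Omega)}\le e^{k_{1}M}\,\|W\|_{H^{2}(\Omega)},
\]
and hence $\Psi\in A^{2}(\Omega)$.

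Next I would establish the classical estimate for analytic functions: for any compact $K\subset\Omega$, letting $\delta=\operatorname{dist}(K,\partial\Omega)>0$ and $r=\delta/2$, every disk $D(z_{0},r)$ with $z_{0}\in K$ is contained in $\Omega$. The mean value property applied to the analytic function $\Psi$ and the Cauchy--Schwarz inequality then yield
\[
|\Psi(z_{0})|=\Big|\tfrac{1}{\pi r^{2}}\!\!\int_{D(z_{0},r)}\!\!\Psi(z)\,dxdy\Big|\le \tfrac{1}{\sqrt{\pi r^{2}}}\,\|\Psi\|_{L^{2}(D(z_{0},r))}\le \tfrac{1}{\sqrt{\pi}\,r}\,\|\Psi\|_{L^{2}(\Omega)},
\]
uniformly in $z_{0}\in K$. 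Note that this still works in the real-linear setting of the paper because, by Remark \ref{LinearRC}, the real and complex $L^{2}$-norms agree.

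Finally I would combine the two inequalities: since $|W(z)|=|\Psi(z)|\,|e^{S(z)}|\le e^{k_{1}M}\,|\Psi(z)|$,
\[
\sup_{z\in K}|W(z)|\le e^{k_{1}M}\sup_{z\in K}|\Psi(z)|\le \frac{e^{k_{1}M}}{\sqrt{\pi}\,r}\,\|\Psi\|_{L^{2}(\Omega)}\le \frac{e^{2k_{1}M}}{\sqrt{\pi}\,r}\,\|W\|_{H^{2}(\Omega)},
\]
so $C_{K}:=e^{2k_{1}M}/(\sqrt{\pi}\,r)$ does the job. The whole argument is essentially routine once the Similarity Principle is in place; the one subtlety worth emphasizing is precisely that the factor $e^{k_{1}M}$ is independent of $W$, which is exactly what Remark \ref{Remark Similarity} guarantees and what makes the reduction to the analytic case produce a constant $C_{K}$ uniform over all of $H^{2}(\Omega)$.
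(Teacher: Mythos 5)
Your proof is correct and follows essentially the same route as the paper: decompose $W=\Psi e^{S}$ via the Similarity Principle, use the $W$-independent bound on $S$ from Remark \ref{Remark Similarity}, and reduce to the classical $\sup$--$L^{2}$ estimate for analytic functions on compact subsets. The only difference is that you prove that classical estimate inline via the area mean value property and Cauchy--Schwarz, whereas the paper simply cites it (Lemma 1.2.1 of Krantz).
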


\begin{proof}
Let $K\subset \Omega $ be a compact and let $W\in H^{2}(\Omega )$. From
Proposition \ref{Similarity} there exist $\Psi $ an analytic function in $%
\Omega $ and $S$ a bounded function in $\mathbb{C}$ such that $W=\Psi e^{S}$%
. It is clear that $\Psi \in A^{2}(\Omega )$. According to Remark \ref%
{Remark Similarity} there exists a constant $M>0$ independent of $W$ such
that $\left\vert e^{S(z)}\right\vert $, $\left\vert e^{-S(z)}\right\vert
\leq M$, for all $z\in \mathbb{C}$. Hence%
\begin{equation*}
\sup_{z\in K}\left\vert W(z)\right\vert \leq M\sup_{z\in K}\left\vert \Psi
(z)\right\vert \text{.}
\end{equation*}%
Moreover, according to \cite[Lemma 1.2.1]{Krantz}, \cite{VekuaBook} there
exists a constant $D_{K}>0$, depending on $K$, such that%
\begin{equation*}
\sup_{z\in K}\left\vert \Psi (z)\right\vert \leq D_{K}\left\Vert \Psi
\right\Vert _{L^{2}(\Omega )}\text{, }\forall \text{ }\Psi \in A^{2}(\Omega
).
\end{equation*}%
Combining the last inequalities we obtain the desired result%
\begin{equation*}
\sup_{z\in K}\left\vert W(z)\right\vert \leq MD_{K}\left\Vert \Psi
\right\Vert _{L^{2}(\Omega )}=MD_{K}\left\Vert We^{-S}\right\Vert
_{L^{2}(\Omega )}\leq M^{2}D_{K}\left\Vert W\right\Vert _{L^{2}(\Omega )}.
\end{equation*}
\end{proof}

\begin{proposition}
\label{H2 Hilbert}$H^{2}(\Omega )$ is a Hilbert space. Moreover, the
convergence in $H^{2}(\Omega )$ implies the uniform convergence on all
compact subsets of $\Omega $.
\end{proposition}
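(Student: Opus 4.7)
The plan is to show that $H^{2}(\Omega)$ is a closed subspace of $L^{2}(\Omega)$ (viewed as a real Hilbert space), which, by Remark \ref{LinearRC} and the completeness of $L^{2}(\Omega)$, immediately gives that $H^{2}(\Omega)$ is itself a real Hilbert space with the inherited inner product. Simultaneously, the argument will deliver uniform convergence on compacta essentially for free, via the pointwise bound of Proposition \ref{Imersion_acotada}.

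First, I would take a Cauchy sequence $\{W_{n}\}\subset H^{2}(\Omega)$. Because the $H^{2}(\Omega)$-norm coincides with the $L^{2}(\Omega)$-norm, there exists $V\in L^{2}(\Omega)$ with $W_{n}\to V$ in $L^{2}(\Omega)$. The key tool is Proposition \ref{Imersion_acotada}: for every compact $K\subset\Omega$,
\begin{equation*}
\sup_{z\in K}\left\vert W_{n}(z)-W_{m}(z)\right\vert \leq C_{K}\left\Vert W_{n}-W_{m}\right\Vert _{H^{2}(\Omega)}\longrightarrow 0,
\end{equation*}
so $\{W_{n}\}$ is uniformly Cauchy on every compact subset of $\Omega$. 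Consequently, there is a continuous function $W:\Omega\to\mathbb{C}$ such that $W_{n}\to W$ uniformly on compacta.

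Next, I would apply Proposition \ref{theorem_pseudo_conv} locally (on an exhausting sequence of compact sets, or equivalently observing uniform convergence on a neighbourhood of every point) to conclude that $W$ is a solution of (\ref{Vekua 1}) in $\Omega$. The preceding proposition then yields $W\in C^{1}(\Omega)$, with H\"{o}lder continuous partial derivatives on compact subdomains. To identify $W$ with the $L^{2}$-limit $V$, I would extract a subsequence of $\{W_{n}\}$ converging to $V$ almost everywhere; since the whole sequence also converges pointwise to $W$, we have $W=V$ a.e., hence $W\in L^{2}(\Omega)$. Therefore $W\in H^{2}(\Omega)$ and $W_{n}\to W$ in $H^{2}(\Omega)$, proving completeness.

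The \emph{moreover} statement is then essentially a restatement of the estimate used above: if $W_{n}\to W$ in $H^{2}(\Omega)$, then Proposition \ref{Imersion_acotada} applied to the differences $W_{n}-W$ gives $\sup_{K}|W_{n}-W|\leq C_{K}\|W_{n}-W\|_{H^{2}(\Omega)}\to 0$ for every compact $K\subset\Omega$. The only subtle point is the identification of the $L^{2}$-limit with the uniform-on-compacta limit; once one remembers that $L^{2}$-convergence implies a.e.\ convergence along a subsequence, the two limits are forced to coincide, and everything else follows cleanly from the previously established tools.
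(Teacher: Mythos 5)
Your proof is correct and follows essentially the same route as the paper: $L^{2}$-completeness plus Proposition \ref{Imersion_acotada} to get uniform Cauchy behaviour on compacta, Proposition \ref{theorem_pseudo_conv} to see the uniform limit solves (\ref{Vekua 1}), and then identification of the two limits. The only cosmetic difference is that you identify the $L^{2}$-limit with the locally uniform limit via an a.e.\ convergent subsequence, whereas the paper compares the two limits in $L^{2}(K)$; both are standard and equivalent.
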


\begin{proof}
Let $W_{n}$ be a Cauchy sequence in $H^{2}(\Omega )$ and let $K$ be a
compact subset of $\Omega $. As $W_{n}$ is a Cauchy sequence in the Hilbert
space $L^{2}(\Omega )$ there exists $W\in L^{2}(\Omega )$ such that $%
W_{n}\rightarrow W$ with respect to the $L^{2}$ norm. Using this together
with Proposition \ref{Imersion_acotada} we see that $W_{n}$ is a Cauchy
sequence in the Banach space $C(K)$. Thus, $W_{n}$ converges uniformly on $K$
to some function $\widetilde{W}\in C(K)$. It follows from Proposition \ref%
{theorem_pseudo_conv} that $\widetilde{W}$ is a solution of the Vekua
equation. In order to finish the proof let us show that $W=\widetilde{W}$
almost everywhere in $K$. It is easy to see from the above reasoning that $%
W_{n}\rightarrow W$ in $L^{2}(K)$ as well as $W_{n}\rightarrow \widetilde{W}$
in $L^{2}(K)$. Thus, $W=\widetilde{W}$ as elements of $L^{2}(K)$ and the
proposition is proved.
\end{proof}

\section{The Bergman kernel for the Vekua equation\label{SectionExistence}}

Let $\zeta \in \Omega $. From Proposition \ref{Imersion_acotada} it follows
that both functionals $\mathcal{R}_{\zeta }$, $\mathcal{I}_{\zeta }$ acting
from $H^{2}(\Omega )$ to $\mathbb{R}$ as $\mathcal{R}_{\zeta }W:=\func{Re}%
W(\zeta )$, $\mathcal{I}_{\zeta }W:=\func{Im}W(\zeta )$ are continuous.
Thus, by Riesz' representation theorem there are functions $K(\zeta
,z)\equiv k_{\zeta }(z)\in H^{2}(\Omega )$ and $L(\zeta ,z)\equiv l_{\zeta
}(z)\in H^{2}(\Omega )$ such that%
\begin{equation}
\func{Re}W(\zeta )=\left\langle W(z),K(\zeta ,z)\right\rangle _{H^{2}(\Omega
)}  \label{rep1}
\end{equation}%
and%
\begin{equation}
\func{Im}W(\zeta )=\left\langle W(z),L(\zeta ,z)\right\rangle _{H^{2}(\Omega
)}  \label{rep2}
\end{equation}%
for all $W\in H^{2}(\Omega )$. The kernels $K(\zeta ,z)$ and $L(\zeta ,z)$
enjoy the following interesting relations.

\begin{proposition}
\label{Prop_KL_rel}For any $\zeta ,z\in \Omega $ the equalities hold%
\begin{equation*}
\func{Re}K(\zeta ,z)=\func{Re}K(z,\zeta ),\text{ \ }\func{Im}L(\zeta ,z)=%
\func{Im}L(z,\zeta )
\end{equation*}%
and%
\begin{equation*}
\func{Re}L(\zeta ,z)=\func{Im}K(z,\zeta ).
\end{equation*}
\end{proposition}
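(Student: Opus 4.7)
The plan is to exploit the reproducing identities (\ref{rep1}) and (\ref{rep2}) twice over, together with the fact that the inner product of the \emph{real} Hilbert space $H^{2}(\Omega)$ is symmetric, $\langle U,V\rangle_{H^{2}(\Omega)}=\langle V,U\rangle_{H^{2}(\Omega)}$ (there is no conjugation to worry about, since we took $\operatorname{Re}$ of the complex pairing). The key observation is simply that, for each fixed $z,\zeta\in\Omega$, the functions $k_{z}$ and $l_{z}$ belong to $H^{2}(\Omega)$, so we are allowed to plug them into the reproducing identities themselves.

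For the first equality, I take $W=k_{z}$ in (\ref{rep1}) and evaluate at the point $\zeta$:
\begin{equation*}
\operatorname{Re}K(z,\zeta)=\operatorname{Re}k_{z}(\zeta)=\langle k_{z},k_{\zeta}\rangle_{H^{2}(\Omega)}.
\end{equation*}
By symmetry of the real inner product, $\langle k_{z},k_{\zeta}\rangle_{H^{2}(\Omega)}=\langle k_{\zeta},k_{z}\rangle_{H^{2}(\Omega)}$, and a second application of (\ref{rep1}) with $W=k_{\zeta}$ evaluated at $z$ identifies this with $\operatorname{Re}k_{\zeta}(z)=\operatorname{Re}K(\zeta,z)$. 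The second equality is proved in exactly the same way after replacing (\ref{rep1}) by (\ref{rep2}) and $k$ by $l$.

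For the cross relation, I mix the two reproducing identities. Since $l_{\zeta}\in H^{2}(\Omega)$, apply (\ref{rep1}) to $W=l_{\zeta}$ at the point $z$:
\begin{equation*}
\operatorname{Re}L(\zeta,z)=\operatorname{Re}l_{\zeta}(z)=\langle l_{\zeta},k_{z}\rangle_{H^{2}(\Omega)}=\langle k_{z},l_{\zeta}\rangle_{H^{2}(\Omega)},
\end{equation*}
the last step being symmetry. Now applying (\ref{rep2}) to $W=k_{z}$ evaluated at $\zeta$ yields $\langle k_{z},l_{\zeta}\rangle_{H^{2}(\Omega)}=\operatorname{Im}k_{z}(\zeta)=\operatorname{Im}K(z,\zeta)$, which is precisely the required identity.

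There is no serious obstacle here; the whole content of the proposition is the well-known "double reproducing" trick used to establish $K(\zeta,z)=\overline{K(z,\zeta)}$ for the classical Bergman kernel, adapted to the fact that the space is real and splits the pointwise evaluation into two functionals $\mathcal{R}_{\zeta}$ and $\mathcal{I}_{\zeta}$. The only point worth being careful about is to apply (\ref{rep1}) or (\ref{rep2}) with the correct test function from $H^{2}(\Omega)$ (namely $k_{z}$, $l_{z}$, $k_{\zeta}$ or $l_{\zeta}$) and to remember that symmetry, rather than Hermitian symmetry, of $\langle\cdot,\cdot\rangle_{H^{2}(\Omega)}$ is what makes the computation close up without conjugates.
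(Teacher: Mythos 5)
Your proof is correct and is essentially the paper's own argument: the paper proves the first identity by the same double application of (\ref{rep1}) together with the symmetry of the real inner product, and dispatches the remaining two identities with ``proved similarly,'' which is exactly the mixed $(\ref{rep1})$/$(\ref{rep2})$ computation you write out. Nothing further is needed.
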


\begin{proof}
Let $\zeta _{1}\in \Omega $. Taking $W(z)=K(\zeta _{1},z)$ in (\ref{rep1})
we obtain $\func{Re}K(\zeta _{1},\zeta )=\left\langle K(\zeta
_{1},z),K(\zeta ,z)\right\rangle _{H^{2}(\Omega )}$. Moreover, since $%
H^{2}(\Omega )$ is a real space 
\begin{equation*}
\func{Re}K(\zeta _{1},\zeta )=\left\langle K(\zeta _{1},z),K(\zeta
,z)\right\rangle _{H^{2}(\Omega )}=\left\langle K(\zeta ,z),K(\zeta
_{1},z)\right\rangle _{H^{2}(\Omega )}=\func{Re}K(\zeta ,\zeta _{1}).
\end{equation*}%
The other equalities are proved similarly.
\end{proof}

\bigskip

Gathering equalities (\ref{rep1}) and (\ref{rep2}) we obtain%
\begin{equation*}
W(\zeta )=\left\langle W(z),K(\zeta ,z)\right\rangle _{H^{2}(\Omega
)}+i\left\langle W(z),L(\zeta ,z)\right\rangle _{H^{2}(\Omega )}
\end{equation*}%
\begin{equation*}
=\int_{\Omega }\left\{ (\func{Re}K(\zeta ,z)+i\func{Re}L(\zeta ,z))\func{Re}%
W(z)+(\func{Im}K(\zeta ,z)+i\func{Im}L(\zeta ,z))\func{Im}W(z)\right\} dxdy.
\end{equation*}%
With the aid of Proposition \ref{Prop_KL_rel} this equality can be written
in the form%
\begin{equation}
W(\zeta )=\int_{\Omega }\left\{ K(z,\zeta )\func{Re}W(z)+L(z,\zeta )\func{Im}%
W(z)\right\} dxdy.  \label{RepW1}
\end{equation}

\begin{definition}
We define the Bergman kernel of the Vekua equation with coefficient $\alpha
\in \mathbb{C}$ and center $\zeta \in \Omega $ as 
\begin{equation*}
B(\alpha ,\zeta ,z):=(\func{Re}\alpha )K(\zeta ,z)+\left( \func{Im}\alpha
\right) L(\zeta ,z).
\end{equation*}
\end{definition}

The previous construction leads to the following statement.

\begin{proposition}
The Bergman kernel $B(\alpha ,\zeta ,z)$ belongs to $H^{2}(\Omega )$ in the
variable $z$ and enjoys the reproducing property%
\begin{equation}
W(\zeta )=\int_{\Omega }B(W(z),z,\zeta )dxdy\text{, \ }\zeta \in \Omega
\label{RepW2}
\end{equation}%
for all $W\in H^{2}(\Omega )$.
\end{proposition}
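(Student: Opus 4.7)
The plan is that both claims fall out almost immediately from what has already been established; the only real care needed is to track the roles of the three slots in $B(\alpha,\zeta,z)$.

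For membership in $H^{2}(\Omega)$, I would observe that the definition reads $B(\alpha,\zeta,z)=(\operatorname{Re}\alpha)K(\zeta,z)+(\operatorname{Im}\alpha)L(\zeta,z)$, so in the variable $z$ (with $\alpha\in\mathbb{C}$ and $\zeta\in\Omega$ held fixed) the function is a real linear combination of $k_{\zeta}(\cdot)=K(\zeta,\cdot)$ and $l_{\zeta}(\cdot)=L(\zeta,\cdot)$. Both of these lie in $H^{2}(\Omega)$ by the Riesz representation argument that introduced $K$ and $L$, and since $H^{2}(\Omega)$ is a \emph{real} linear space, the combination remains in $H^{2}(\Omega)$.

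For the reproducing identity, I would simply unpack $B(W(z),z,\zeta)$: substituting $\alpha=W(z)$, with the second argument now playing the role of the center and $\zeta$ being the evaluation variable, one gets
\begin{equation*}
B(W(z),z,\zeta)=(\operatorname{Re}W(z))\,K(z,\zeta)+(\operatorname{Im}W(z))\,L(z,\zeta).
\end{equation*}
This is exactly the integrand in the already-established formula (\ref{RepW1}), so integrating over $\Omega$ gives $W(\zeta)$ for every $W\in H^{2}(\Omega)$ and every $\zeta\in\Omega$.

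There is no substantive obstacle here; the real work was done earlier in Propositions \ref{Imersion_acotada} and \ref{Prop_KL_rel}, which supplied the continuity of the evaluation functionals and the symmetry relations needed to pass from (\ref{rep1})–(\ref{rep2}) to (\ref{RepW1}). The only thing I would be careful about in writing up the proof is making sure the reader sees that the swap of $(\zeta,z)$ into $(z,\zeta)$ inside the kernels is legitimate — and this is precisely what Proposition \ref{Prop_KL_rel} guarantees, so it suffices to cite it.
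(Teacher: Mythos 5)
Your argument is correct and coincides with the paper's own reasoning: the paper gives no separate proof, stating only that ``the previous construction leads to the following statement,'' which is exactly your unpacking of $B(W(z),z,\zeta)$ into the integrand of (\ref{RepW1}) together with the observation that $B(\alpha,\zeta,\cdot)$ is a real linear combination of $K(\zeta,\cdot),L(\zeta,\cdot)\in H^{2}(\Omega)$.
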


\begin{remark}
Consider the case $a=b=0$, meaning that the Vekua equation defines analytic
functions. Hence if $W\in A^{2}(\Omega )$ then $iW\in A^{2}(\Omega )$ and $%
<W,iV>_{A^{2}(\Omega )}=-<iW,V>_{A^{2}(\Omega )}$. Using this together with (%
\ref{rep1}) and (\ref{rep2}) it is easy to see that 
\begin{equation*}
\func{Re}L(\zeta ,z)=-\func{Re}L(z,\zeta )\text{, \ \ }\func{Im}L(\zeta ,z)=%
\func{Re}K(z,\zeta ).
\end{equation*}%
On the other hand, from the last equalities and Proposition \ref{Prop_KL_rel}
it follows that $K(z,\zeta )=-iL(z,\zeta )$. Substituting this into (\ref%
{RepW1}) gives us the classical Bergman reproducing formula \cite{Bergman}%
\begin{equation*}
W(\zeta )=\int_{\Omega }K(z,\zeta )W(z)dxdy\text{.}
\end{equation*}
\end{remark}

\section{Construction of the Bergman kernel by means of a countable
orthonormal system. The Bergman projection\label{Sect_Project}}

Since $H^{2}(\Omega )$ is a closed subspace of $L^{2}(\Omega )$ then%
\begin{equation}
L^{2}(\Omega )=H^{2}(\Omega )\oplus \left( H^{2}(\Omega )\right) ^{\bot }
\label{L2descomp}
\end{equation}%
and there exists an orthogonal projection $P_{\Omega }$ from $L^{2}(\Omega )$
onto $H^{2}(\Omega )$. $P_{\Omega }$ is named the Bergman projection.

\begin{proposition}
Let $\varphi \in L^{2}(\Omega )$. Then%
\begin{equation*}
\left( P_{\Omega }\varphi \right) (\zeta )=\int_{\Omega }B(\varphi
(z),z,\zeta )dxdy\text{, \ }\zeta \in \Omega \text{.}
\end{equation*}
\end{proposition}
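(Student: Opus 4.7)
The plan is to reduce the statement to the reproducing formula (\ref{RepW2}) via the orthogonal decomposition (\ref{L2descomp}). Write $\varphi = W + V$ with $W := P_{\Omega}\varphi \in H^{2}(\Omega)$ and $V \in H^{2}(\Omega)^{\perp}$. By linearity of the integrand $B(\cdot,z,\zeta)$ in its first argument, the right-hand side splits as
\begin{equation*}
\int_{\Omega} B(\varphi(z),z,\zeta)\,dxdy \;=\; \int_{\Omega} B(W(z),z,\zeta)\,dxdy \;+\; \int_{\Omega} B(V(z),z,\zeta)\,dxdy.
\end{equation*}
The first term equals $W(\zeta) = (P_{\Omega}\varphi)(\zeta)$ by the reproducing property (\ref{RepW2}), so everything reduces to showing that the second term vanishes for every $V \perp H^{2}(\Omega)$.

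To do this, I would expand the definition of $B$ and separate real and imaginary parts. The real part of $\int_{\Omega} B(V(z),z,\zeta)\,dxdy$ is
\begin{equation*}
\int_{\Omega}\bigl\{\operatorname{Re}V(z)\operatorname{Re}K(z,\zeta) + \operatorname{Im}V(z)\operatorname{Re}L(z,\zeta)\bigr\}\,dxdy,
\end{equation*}
while the imaginary part is the analogous expression with $\operatorname{Im}K(z,\zeta)$ and $\operatorname{Im}L(z,\zeta)$. Here I would apply Proposition \ref{Prop_KL_rel} to trade the dependence on the first argument for dependence on the second: $\operatorname{Re}K(z,\zeta) = \operatorname{Re}K(\zeta,z)$, $\operatorname{Re}L(z,\zeta) = \operatorname{Im}K(\zeta,z)$ (swap of variables), $\operatorname{Im}L(z,\zeta) = \operatorname{Im}L(\zeta,z)$, and $\operatorname{Im}K(z,\zeta) = \operatorname{Re}L(\zeta,z)$.

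After this substitution, the real part becomes
\begin{equation*}
\int_{\Omega}\bigl\{\operatorname{Re}V(z)\operatorname{Re}K(\zeta,z) + \operatorname{Im}V(z)\operatorname{Im}K(\zeta,z)\bigr\}\,dxdy \;=\; \langle V(\cdot), K(\zeta,\cdot)\rangle_{H^{2}(\Omega)},
\end{equation*}
and the imaginary part becomes $\langle V(\cdot), L(\zeta,\cdot)\rangle_{H^{2}(\Omega)}$. Since $K(\zeta,\cdot), L(\zeta,\cdot) \in H^{2}(\Omega)$ by construction and $V \in H^{2}(\Omega)^{\perp}$, both inner products vanish, completing the argument.

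The main obstacle is purely bookkeeping: correctly swapping arguments in $K$ and $L$ so that the kernels in the integrand acquire the form $K(\zeta,z)$ and $L(\zeta,z)$ needed to recognize the $H^{2}(\Omega)$-inner product (under which $V$ is orthogonal to everything in $H^{2}$). The four symmetry relations of Proposition \ref{Prop_KL_rel} are exactly what makes this identification work, so the proof is essentially an orthogonality argument dressed up in the real-space inner product $\langle W,V\rangle = \int_{\Omega}(\operatorname{Re}W\operatorname{Re}V + \operatorname{Im}W\operatorname{Im}V)\,dxdy$.
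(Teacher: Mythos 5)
Your proof is correct and takes essentially the same route as the paper: decompose $\varphi$ via (\ref{L2descomp}), use Proposition \ref{Prop_KL_rel} to recognize the integral against the orthogonal component as the inner products $\left\langle V,K(\zeta ,\cdot )\right\rangle _{H^{2}(\Omega )}$ and $\left\langle V,L(\zeta ,\cdot )\right\rangle _{H^{2}(\Omega )}$, which vanish, and apply the reproducing property to the $H^{2}(\Omega )$-component. The paper merely packages the same computation by introducing the auxiliary operator $Q_{\Omega }$ and identifying it with $P_{\Omega }$.
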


\begin{proof}
From (\ref{rep1}), (\ref{rep2}) and by Proposition \ref{Prop_KL_rel} it is
straightforward to see that%
\begin{equation*}
\int_{\Omega }B(\varphi (z),z,\zeta )dxdy=\left\langle \varphi (z),K(\zeta
,z)\right\rangle _{H^{2}(\Omega )}+i\left\langle \varphi (z),L(\zeta
,z)\right\rangle _{H^{2}(\Omega )}.
\end{equation*}%
Let $Q_{\Omega }$ be the operator defined on $L^{2}(\Omega )$ by the
right-hand side of this equality. According to (\ref{L2descomp}), if $%
\varphi \in L^{2}(\Omega )$ there exist $W\in H^{2}(\Omega )$ and $\psi \in
\left( H^{2}(\Omega )\right) ^{\bot }$ such that $\varphi =W+\psi $. Since $%
\psi $ is orthogonal to both $K(\zeta ,z)$ and $L(\zeta ,z)$ (both kernels
belong to $H^{2}(\Omega )$) then $Q_{\Omega }\psi =0$ and hence 
\begin{equation*}
Q_{\Omega }\varphi =Q_{\Omega }(W+\psi )=Q_{\Omega }W+Q_{\Omega }\psi
=Q_{\Omega }W.
\end{equation*}%
This shows that $Q_{\Omega }^{2}=Q_{\Omega }$ and that the range of $%
Q_{\Omega }$ is $H^{2}(\Omega )$, or equivalently, $Q_{\Omega }$ is the
orthogonal projection from $L^{2}(\Omega )$ onto $H^{2}(\Omega ).$ Hence $%
Q_{\Omega }=P_{\Omega }$ and the proof is finished.
\end{proof}

\bigskip

A subset $M$ of a normed linear space $X$ is called complete if $\limfunc{%
span}M$ is dense in $X$. Obviously, if $\mathcal{H}$ is a Hilbert space then
from a countable and complete subset of $\mathcal{H}$ it is possible to
construct a countable orthonormal basis of $\mathcal{H}$, first removing the
linearly dependent elements and then applying the Gram--Schmidt
orthonormalization process. Moreover, under the notation of Remark \ref%
{LinearRC} it is clear that if $\left\{ \varphi _{n}\right\} _{n\in \mathbb{N%
}}$ is a complete subset of $\mathcal{H}_{\mathbb{C}}$ then $\left\{ \varphi
_{n},i\varphi _{n}\right\} _{n\in \mathbb{N}}$ is a complete subset of $%
\mathcal{H}_{\mathbb{R}}.$

\begin{remark}
If $\Omega $ is a bounded simply connected domain whose boundary is also the
boundary of an infinite region then the system of the usual nonnegative
powers $\left\{ z^{n},iz^{n}\right\} _{n\in \mathbb{N}_{0}}$ is complete in $%
A^{2}(\Omega )$ \cite{Farrel}, \cite{Markushevich}.
\end{remark}

\begin{proposition}
$H^{2}(\Omega )$ has a countable complete subset.
\end{proposition}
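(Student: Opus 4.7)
My plan is to derive the existence of a countable complete subset of $H^{2}(\Omega)$ from the general separability of $L^{2}(\Omega)$. Any countable dense subset of a normed space is \emph{a fortiori} complete (its span is automatically dense), so it suffices to exhibit a countable dense subset of $H^{2}(\Omega)$.

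First I would invoke the standard fact that $L^{2}(\Omega)$, viewed as a real Hilbert space, is separable; an explicit countable dense family for bounded $\Omega$ is given by finite rational linear combinations of characteristic functions of dyadic squares with rational vertices contained in $\Omega$. Next, by Proposition \ref{H2 Hilbert}, $H^{2}(\Omega)$ is a closed subspace of $L^{2}(\Omega)$. Since separability is hereditary in metric spaces (a separable metric space is second countable and second countability passes to subspaces), $H^{2}(\Omega)$ inherits separability, and any countable dense subset of it serves as the desired countable complete subset.

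I do not foresee a substantive obstacle: the statement is essentially packaged into Proposition \ref{H2 Hilbert} together with the soft measure-theoretic fact that $L^{2}(\Omega)$ is separable. The only point worth flagging is that $H^{2}(\Omega)$ is regarded as a real Hilbert space, but this does not affect the argument since the underlying metric topology coincides with the complex one. A more constructive alternative would be to transport the complete system $\{z^{n},iz^{n}\}$ of $A^{2}(\Omega)$ (see the preceding remark, for sufficiently nice $\Omega$) to $H^{2}(\Omega)$ via the Similarity Principle $W=\Psi e^{S}$. This route is delicate because the factor $S$ depends on $W$, making the correspondence non-linear, and it would require solving the integral equation of Proposition \ref{prop_int_eq} globally to obtain a continuous linear map with dense range. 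For the bare existence claim, the separability argument is considerably cleaner and conceptually sufficient.
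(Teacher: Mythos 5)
Your argument is correct, but it follows a genuinely different route from the paper. You obtain the result softly: $L^{2}(\Omega)$ is separable, separability is hereditary for metric subspaces, hence $H^{2}(\Omega)$ has a countable dense (a fortiori complete) subset; note that this does not even use closedness of $H^{2}(\Omega)$, only that it sits inside $L^{2}(\Omega)$ isometrically. The paper instead argues constructively through the Bergman projection introduced just before: it takes a countable orthonormal basis $\left\{ \varphi _{n}\right\}$ of $L^{2}(\Omega )$, writes $W=\sum_{n}c_{n}\varphi _{n}$ for $W\in H^{2}(\Omega )$, and applies the bounded projection $P_{\Omega }$ term by term to get $W=\sum_{n}c_{n}P_{\Omega }\varphi _{n}$, so that $\left\{ P_{\Omega }\varphi _{n}\right\}$ is an explicit countable complete system in $H^{2}(\Omega )$ (this does use closedness of $H^{2}(\Omega )$, via the existence and boundedness of $P_{\Omega }$). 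What each buys: your argument is shorter and needs less machinery; the paper's argument produces a concrete complete family tied to the projection operator, which fits the purpose of the section (feeding a Gram--Schmidt process to build an orthonormal basis and hence the kernel). One cosmetic remark on your explicit dense family: since the elements of $L^{2}(\Omega )$ are complex valued while the space is treated as a real Hilbert space, you should take combinations of characteristic functions with coefficients in $\mathbb{Q}+i\mathbb{Q}$ (equivalently, include both $\chi$ and $i\chi$ with rational real coefficients); this is a minor adjustment, not a gap. Your caution about the similarity-principle route is well placed: the factor $e^{S}$ depends on $W$, so that correspondence is not linear and cannot be used directly to transport a complete system from $A^{2}(\Omega )$.
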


\begin{proof}
It is well know that $L^{2}(\Omega )$ has a countable orthonormal basis $%
\left\{ \varphi _{n}\right\} _{n\in \mathbb{N}}$ (see \cite{Reed}). Thus,
given $W\in H^{2}(\Omega )\subset L^{2}(\Omega )$ we can write the
corresponding Fourier series $W=\sum_{n=1}^{\infty }c_{n}\varphi _{n}$ with $%
c_{n}=\left\langle W,\varphi _{n}\right\rangle _{L^{2}(\Omega )}$. Since $%
P_{\Omega }$ is a bounded operator and a projection onto $H^{2}(\Omega )$,
after applying $P_{\Omega }$ to both sides of the above series we get $%
W=P_{\Omega }W=\sum_{n=1}^{\infty }c_{n}P_{\Omega }\varphi _{n}$. This means
that $\limfunc{span}\left\{ P_{\Omega }\varphi _{n}\right\} _{n\in \mathbb{N}%
}$ is dense in $H^{2}(\Omega )$. The statement is proved.
\end{proof}

\begin{proposition}
Let $\left\{ \varphi _{n}\right\} _{n\in \mathbb{N}}$ be an orthonormal
basis of $H^{2}(\Omega )$ and $W\in H^{2}(\Omega )$. Then,%
\begin{equation}
W(z)=\dsum\limits_{n=1}^{\infty }\left\langle W,\varphi _{n}\right\rangle
_{H^{2}(\Omega )}\varphi _{n}(z).  \label{Fourier}
\end{equation}%
The series converges in $H^{2}(\Omega )$ with respect to the variable $z$
and uniformly on compact subsets of $\Omega .$ In particular,%
\begin{equation*}
B(\alpha ,\zeta ,z)=\dsum\limits_{n=1}^{\infty }\left[ \func{Re}\left( 
\overline{\alpha }\varphi _{n}(\zeta )\right) \right] \varphi _{n}(z).
\end{equation*}
\end{proposition}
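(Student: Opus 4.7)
The plan is to split the proof into two routine Hilbert-space pieces, then specialize the expansion to $W = B(\alpha,\zeta,\cdot)$ and compute its Fourier coefficients explicitly.

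First, since $\{\varphi_n\}_{n\in\mathbb{N}}$ is an orthonormal basis of the real Hilbert space $H^{2}(\Omega)$ (whose existence was guaranteed by the previous propositions), equation (\ref{Fourier}) is the standard Fourier expansion of $W\in H^{2}(\Omega)$ and converges in the $H^{2}(\Omega)$-norm. Nothing non-trivial enters here.

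Second, to upgrade convergence in $H^{2}(\Omega)$ to uniform convergence on compact subsets of $\Omega$, I would denote by $S_{N}(z)=\sum_{n=1}^{N}\langle W,\varphi_n\rangle_{H^{2}(\Omega)}\varphi_n(z)$ the partial sums, note that $W-S_{N}\in H^{2}(\Omega)$, and apply Proposition \ref{Imersion_acotada} to $W-S_{N}$. This gives $\sup_{z\in K}|W(z)-S_{N}(z)|\leq C_{K}\|W-S_{N}\|_{H^{2}(\Omega)}\to 0$ for every compact $K\subset\Omega$, which is precisely uniform convergence on compact subsets.

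Third, for the identity for $B(\alpha,\zeta,z)$, I would fix $\zeta\in\Omega$ and $\alpha\in\mathbb{C}$, observe that the function $z\mapsto B(\alpha,\zeta,z)$ lies in $H^{2}(\Omega)$ by the preceding proposition, and apply (\ref{Fourier}) to it. The Fourier coefficients are then
\begin{equation*}
\langle B(\alpha,\zeta,\cdot),\varphi_n\rangle_{H^{2}(\Omega)} = (\operatorname{Re}\alpha)\langle K(\zeta,\cdot),\varphi_n\rangle_{H^{2}(\Omega)} + (\operatorname{Im}\alpha)\langle L(\zeta,\cdot),\varphi_n\rangle_{H^{2}(\Omega)}.
\end{equation*}
By the reproducing identities (\ref{rep1}) and (\ref{rep2}) applied to $W=\varphi_n$, together with the symmetry of the real inner product, these two pairings equal $\operatorname{Re}\varphi_n(\zeta)$ and $\operatorname{Im}\varphi_n(\zeta)$ respectively. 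Hence the $n$-th coefficient reduces to $(\operatorname{Re}\alpha)\operatorname{Re}\varphi_n(\zeta)+(\operatorname{Im}\alpha)\operatorname{Im}\varphi_n(\zeta)=\operatorname{Re}(\overline{\alpha}\varphi_n(\zeta))$, yielding the claimed formula.

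There is really no hard step here; the substance of the result lies in Proposition \ref{Imersion_acotada} (which does the work of converting $L^{2}$ convergence to local uniform convergence) and in the reproducing identities for $K$ and $L$ (which identify the Fourier coefficients cleanly). The only mildly delicate point is being careful that the inner product is the \emph{real} one of Remark \ref{LinearRC}, so that the pairings genuinely produce $\operatorname{Re}\varphi_n(\zeta)$ and $\operatorname{Im}\varphi_n(\zeta)$ with the correct signs and combine into $\operatorname{Re}(\overline{\alpha}\varphi_n(\zeta))$ rather than some complex-bilinear variant.
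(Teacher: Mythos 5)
Your proposal is correct and follows essentially the same route as the paper: Fourier expansion in the Hilbert space $H^{2}(\Omega)$, local uniform convergence from the pointwise bound (the paper cites Proposition \ref{H2 Hilbert}, which is just Proposition \ref{Imersion_acotada} packaged as a convergence statement, exactly as you apply it to $W-S_N$), and specialization to $W=B(\alpha,\zeta,\cdot)$. Your explicit computation of the coefficients $\langle B(\alpha,\zeta,\cdot),\varphi_n\rangle_{H^{2}(\Omega)}=\operatorname{Re}(\overline{\alpha}\varphi_n(\zeta))$ via (\ref{rep1}), (\ref{rep2}) is precisely the step the paper leaves implicit, and it is carried out correctly.
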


\begin{proof}
Formula (\ref{Fourier}) is the representation of $W$ as a Fourier series
corresponding to the orthonormal basis $\left\{ \varphi _{n}\right\} $, thus
it is convergent in $H^{2}(\Omega )$. The uniform convergence then follows
due to Proposition \ref{H2 Hilbert}. The last part of the statement follows
from (\ref{Fourier}) taking $W(z)=B(\alpha ,\zeta ,z)$.
\end{proof}

\begin{remark}
An analogue of the Runge theorem from complex analysis is available for the
Vekua equation, replacing the usual powers of complex analysis by a special
countable system of solutions of the Vekua equation called formal powers 
\cite{BersBook}. There are general conditions under which the system of
formal powers can be constructed by a simple algorithm \cite{BersBook}, \cite%
{VladBook}, \cite{CCK}, \cite{CK}. The previous proposition is obviously
related to the Runge theorem due to uniform convergence on compact sets. We
conjecture that \ if $\Omega $ is a Jordan domain then the system of
nonnegative formal powers is complete in $H^{2}(\Omega )$.
\end{remark}

\begin{remark}
The requirement about the smoothness of the coefficients $a$ and $b$ of
equation (\ref{Vekua 1}) was imposed in order to simplify the exposition. If 
$a,b\in L^{p}(\Omega )$, $p>2$, and dealing with weak solutions of (\ref%
{Vekua 1}), Proposition \ref{theorem_pseudo_conv} and Theorem \ref%
{Similarity} (the ones used in Section \ref{SectBergman Sp}) keep to be
valid \cite{VekuaBook}. Moreover, weak solutions of (\ref{Vekua 1}) are H%
\"{o}lder continuous and thus the study of the functionals $\mathcal{R}%
_{\zeta }W:=\func{Re}W(\zeta )$, $\mathcal{I}_{\zeta }W:=\func{Im}W(\zeta )$
makes sense (note that this was the starting point in Section \ref%
{SectionExistence} to establish the existence of the Bergman kernel).
\end{remark}

\end{document}